\newtheorem{theorem}{Theorem}[section]
\newtheorem{lemma}[theorem]{Lemma}
\newtheorem{proposition}[theorem]{Proposition}
\newtheorem{corollary}[theorem]{Corollary}
\newtheorem{conjecture}[theorem]{Conjecture}
\newtheorem{exmp}[theorem]{Example}
\newtheorem{exmps}[theorem]{Examples}
\newtheorem{rem}[theorem]{Remark}
\newenvironment{remark}{\begin{rem}\rm}{\end{rem}\rm}
\newcommand{\qqed}{\hspace*{\fill}$\Box$}
\newcommand{\beeq}[1]{\begin{eqnarray}\label{#1}}
\newcommand{\eneq}{\end{eqnarray}}
\newcommand{\cal}{\mathcal}
\newcommand{\kc}{{\cal C}}
\newcommand{\kl}{{\cal L}}
\newcommand{\kn}{{\cal N}}
\newcommand{\km}{{\cal M}}
\newcommand{\ko}{{\cal O}}
\newcommand{\kt}{{\cal T}}
\newcommand{\kx}{{\cal X}}
\newcommand{\ZZ}{\mathbb{Z}}
\newcommand{\CC}{\mathbb{C}}
\newcommand{\PP}{\mathbb{P}}
\newcommand{\gm}{{\mathfrak M}}
\newcommand{\CH}{{\rm CH}}
\newcommand{\Pic}{{\rm Pic}}
\newcommand{\NS}{{\rm NS}}
\newcommand{\id}{{\rm id}}
\newcommand{\verylongarrow}[1]{\hbox to #1{\rightarrowfill}}
\newcommand{\congpf}{\xymatrix@1@=15pt{\ar[r]^-\sim&}}
\renewcommand{\to}{\xymatrix@1@=15pt{\ar[r]&}}
\begin{document}

\title[Stable maps and  Chow groups]{Stable maps
and Chow groups}

\author[D.\ Huybrechts and M.\ Kemeny]{D.\ Huybrechts and M.\  Kemeny}
\address{{\rm Daniel Huybrechts}\\
Mathematisches Institut, Universit{\"a}t Bonn\\
 Endenicher Allee  60\\
 53115 Bonn\\
 Germany}
\email{huybrech@math.uni-bonn.de}

\address{{\rm Michael Kemeny}\\
Mathematisches Institut, Universit{\"a}t Bonn\\
Endenicher Allee 60\\
 53115 Bonn\\
 Germany}
\email{michael.kemeny@gmail.com}
\thanks{This work was supported by the SFB/TR 45 `Periods, Moduli Spaces and Arithmetic of Algebraic Varieties' of the DFG (German Research Foundation).
The second author is supported by  PhD-scholarship of the Bonn International Graduate
School in Mathematics.} 

\begin{abstract}
According to the Bloch--Beilinson conjectures, an automorphism of a K3 surface $X$ that
acts as the identity on the transcendental lattice should act trivially on $\CH^2(X)$.
We discuss this conjecture for symplectic involutions and prove it in one third
of all cases. The main point is to use special elliptic K3 surfaces and stable maps
to produce covering families of elliptic curves on the generic K3 surface that are invariant under the involution.
\end{abstract}
 \maketitle

\subsection{}
Let $X$ be a complex projective K3 surface with an automorphism $f:X\congpf X$. According to the general philosophy
of the Bloch--Beilinson conjectures, the induced action of $f$ on the kernel of the cycle map $\CH^*(X)\to H^*(X,\ZZ)$ should be determined by the action of $f$ on the cokernel of the cycle map. More precisely, one expects the following to be true:

\begin{conjecture}\label{conj:BB}
 $f^*=\id $ on $\CH^2(X)_0$ if and only if $f^*=\id$ on $T(X)$.
\end{conjecture}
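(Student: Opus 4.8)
The plan is to first strip the conjecture down to its substantial half. An involution satisfies $f^*\sigma_X=\pm\sigma_X$ on the line $H^{2,0}(X)=\CC\,\sigma_X$, and $f$ is symplectic precisely when the sign is $+$. Since $T(X)_\QQ$ is the minimal sub-Hodge structure of $H^2(X,\QQ)$ containing $\sigma_X$, the invariant part $T(X)_\QQ^{\,f^*}$ is itself a sub-Hodge structure, and it contains $\sigma_X$ as soon as $f$ is symplectic; minimality then forces $f^*=\id$ on $T(X)$, while the converse is trivial. Hence the right-hand condition in Conjecture \ref{conj:BB} is equivalent to $f$ being symplectic, and the statement becomes: \emph{$f^*=\id$ on $\CH^2(X)_0$ if and only if $f$ is symplectic}. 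The implication from Chow to symplectic I would obtain from Mumford's method: triviality of $f^*$ on $\CH^2(X)_0$ forces triviality of the induced action on holomorphic forms, so $f^*\sigma_X=\sigma_X$ and $f$ is symplectic. The whole weight thus rests on the reverse implication, that a symplectic involution acts trivially on $\CH^2(X)_0$, and this is what I would attack with elliptic curves.

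The model case is an elliptic K3 surface $\pi:X\to\PP^1$ with section $O$ on which $f$ acts fibrewise. A symplectic involution has only isolated fixed points, so on a general fibre $E_t$ the restriction $f|_{E_t}$ is a fixed-point-free involution, i.e.\ translation by a $2$-torsion point; globally these assemble into a $2$-torsion section $\tau$ in the Mordell--Weil group. For $x\in E_t$ one then has $f(x)=x+\tau(t)$ in the group law with origin $O(t)$, so that $[f(x)]-[x]$ and $[\tau(t)]-[O(t)]$ represent the same class $\tau(t)$ in $\Pic^0(E_t)\cong E_t$, hence the same class already in $\CH_0(E_t)$ and a fortiori in $\CH^2(X)$. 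But $\tau$ and $O$ are sections of an elliptic K3, hence rational curves, so every point of $\tau$ and of $O$ represents the canonical Beauville--Voisin class $c_X\in\CH^2(X)$. Therefore $[f(x)]-[x]=c_X-c_X=0$ for all $x$ on smooth fibres, and since these dominate $X$ a standard specialisation argument extends this to all of $X$, giving $f^*=\id$ on $\CH^2(X)_0$.

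The difficulty is that a \emph{general} K3 surface carrying a symplectic involution has Picard lattice only the invariant lattice and admits no such elliptic fibration, so the model computation does not apply directly. Here I would use stable maps. Choosing a degeneration $\kx\to\Delta$ of the general pair $(X,f)$ to a special elliptic model $X_0$ as above, with $f$ extending over $\Delta$, the fibres of $\pi$ provide a covering family of $f$-invariant genus-$1$ curves on $X_0$. Because the moduli space $\overline{M}_{1,0}(\kx/\Delta,\beta)$ of stable maps is proper over $\Delta$ and inherits the $f$-action, this covering family of invariant stable maps deforms to the general fibre, yielding on the general $X$ a covering family of $f$-invariant (possibly nodal) genus-$1$ curves. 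Running the fibrewise computation along this family, and identifying the resulting difference class with the Beauville--Voisin constant by specialisation to $X_0$, should again force $[f(x)]-[x]=0$ and hence $f^*=\id$ on $\CH^2(X)_0$.

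The hard part is precisely this transport step. One must guarantee that the deformed stable maps still dominate $X$, that they stay invariant with the fixed-point-free translation behaviour so that $x\mapsto[f(x)]-[x]$ remains a constant cycle on each curve, and that this constant is forced to vanish even though honest rational sections are no longer available in the general member. Securing all of this imposes restrictions on the lattice-polarised families of symplectic involutions that possess a suitable special elliptic model, and it is this constraint that I expect to confine the argument to one third of all cases; removing it would require building the invariant covering families directly on the remaining families, which is where I anticipate the essential obstruction.
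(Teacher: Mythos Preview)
Your overall architecture---reduce to the symplectic direction, verify it on a special elliptic K3 where $f$ is translation by a $2$-torsion section, then transport via stable maps to the generic member of the lattice-polarised family---is exactly the paper's strategy for Theorem~\ref{thm:main} (the conjecture itself is not proved in full). Two concrete points, however, separate your sketch from a working argument.

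First, you propose to deform the elliptic \emph{fibres} of $X_0$ as stable maps in the class $\beta=[F]$. This fails: on the generic $(\kx_t,f_t)\in\gm_{\widetilde\Lambda_d}$ the N\'eron--Severi lattice drops to rank~$9$ and the fibre class $[F]$ does not survive as an algebraic class, so the relative moduli space of stable maps in class $[F]$ is empty over the general point of $\Delta$. (Properness alone never guarantees dominance of the base; one needs the dimension estimate of Proposition~\ref{prop:defo} and Corollary~\ref{cor:defo}.) The paper instead works in the class of the \emph{invariant ample polarisation} $L$, which does deform by construction, building a reducible genus-one curve $C=eN+F+\sigma+\tau\in|L|$ on $X_0$. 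Since $C$ is $f$-invariant and avoids ${\rm Fix}(f)$, one passes to the quotient $\bar X$ and deforms the resulting stable map $D\to\bar C\subset\bar X$ inside $|\bar\kl|$; the preimages in $\kx_t$ are then automatically $f_t$-invariant and, for generic $t$, integral of geometric genus one and disjoint from ${\rm Fix}(f_t)$.

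Second, your ``transport'' of the identity $[f(x)]-[x]=0$ by tracking the Beauville--Voisin class through the degeneration is harder than necessary and is precisely where your sketch goes vague. The paper's Proposition~\ref{prop:Roit} bypasses this entirely: once a generic $x\in\kx_t$ lies on an integral $f_t$-invariant curve of geometric genus one missing ${\rm Fix}(f_t)$, the induced action on the normalisation $C$ is a fixed-point-free automorphism of an elliptic curve, hence translation by a torsion point, so $\ko_C(\tilde x-\widetilde{f(x)})\in\Pic^0(C)$ is torsion. Pushing forward, $[x]-[f(x)]$ is torsion in $\CH^2(\kx_t)$, and Roitman's theorem on the torsion-freeness of $\CH^2$ of a surface kills it. No reference to $c_X$ or to specialisation of cycle classes is needed.
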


Here, $\CH^2(X)_0\subset\CH^2(X)$ is the degree zero part, i.e.\ the kernel of 
the cycle map $\CH^2(X)\to H^4(X,\ZZ)\cong\ZZ$, and $T(X)\subset H^2(X,\ZZ)$ is the transcendental lattice which can be described as the orthogonal complement of the N\'eron--Severi group $\NS(X)\subset H^2(X,\ZZ)$.  Alternatively, $T(X)\subset H^2(X,\ZZ)$ is the smallest sub-Hodge structure such that $H^{2,0}(X)\subset T(X)\otimes\CC$.
Thus, $f^*=\id$ on $T(X)$ if and only if $f$ acts trivially on $H^{2,0}(X)$. The latter is spanned by the unique (up to scaling) regular two-form $\sigma\in H^0(X,\Omega_X^2)$,
of which we think as a holomorphic symplectic structure. For this reason, an automorphism $f:X\congpf X$ with $f^*=\id$ on $T(X)$ is called a symplectomorphism.

It is well known that  $f^*=\id$ on $\CH^2(X)_0$ implies that $f$ acts trivially on $T(X)$
(see e.g.\ \cite[Ch.\ 23]{Voisin}). Appropriately rephrased, this holds for arbitrary smooth projective varieties and for arbitrary correspondences. It is the converse of the statement that is difficult  and that shall  be discussed here for symplectic  involutions of K3 surfaces, i.e.\ automorphisms $f$ of order two with $f^*\sigma=\sigma$.
 
 \subsection{} K3 surfaces $X$ endowed with a symplectic involution $f:X\congpf X$ come in families. As shown by van Geemen and Sarti in
 \cite{vGS}, the moduli space of such $(X,f)$ has one resp.\ two connected components in each
 degree $2d>0$ depending on the parity of $d$.
 To be more precise, let $\Lambda_d$ be the lattice $\ZZ\ell\oplus E_8(-2)$ with $(\ell.\ell)=2d$
 and denote for $d\equiv0(2)$ by $\widetilde\Lambda_d$ the unique even lattice containing $\Lambda_d$
with $\widetilde\Lambda_d/\Lambda_2\cong\ZZ/2\ZZ$ and such that $E_8(-2)\subset\widetilde\Lambda_d$ is primitive (see \cite[Prop.\ 2.2]{vGS}).
 
Then for generic $(X,f)$ one has $\NS(X)\cong\Lambda_d$ or $\NS(X)\cong\widetilde\Lambda_d$.
The class $\ell$ corresponds under this isomorphism to an ample line bundle $L$ on $X$ which spans the $f$-invariant part of $\NS(X)$. If $(X,f)$ is not generic, then one still finds
$\Lambda_d$ or $\widetilde\Lambda_d$ as a primitive sublattice in $\NS(X)$ with $E_8(-2)$ as the
orthogonal complement of the invariant part. Conversely, by the Global Torelli theorem
any $X$ parametrized by the (non-empty and in fact $11$-dimensional) connected moduli
spaces $\gm_{\Lambda_d}$ or $\gm_{\widetilde\Lambda_d}$ of $\Lambda_d$ resp.\ $\widetilde\Lambda_d$-lattice polarized K3 surfaces comes with a symplectic involution $f$ that is determined by
its action $=-\id$ on $E_8(-2)$ and $=\id$ on its orthogonal complement.
 
In other words, for each $d\equiv1(2)$ the moduli space of
K3 surfaces $X$ with a symplectic involution $f$ and an invariant
 polarization of degree $2d$ has one connected component $\gm_{\Lambda_d}$,
 whereas  for $d\equiv 0(2)$ it has two connected components, $\gm_{\Lambda_d}$
 and $\gm_{\widetilde\Lambda_d}$. Thus the following theorem, the main result of the
 present paper, proves Conjecture \ref{conj:BB} in one third of all possible cases.
 
 \begin{theorem}\label{thm:main}
 Let $d\equiv0(2)$ and $(X,f)\in\gm_{\widetilde\Lambda_d}$.
 Then $f^*=\id$ on $\CH^2(X)$.
 \end{theorem}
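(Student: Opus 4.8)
The plan is to translate the vanishing of $f^{*}-\id$ into a statement about $0$-cycles lying on elliptic curves and to exploit the torsion-freeness of the Chow group of a K3 surface. First I would note that $\CH^2(X)=\CH_0(X)$ and that, $f$ being an involution, $f^{*}=f_{*}$; so it suffices to prove $f(x)=x$ in $\CH_0(X)$ for $x$ in a Zariski-dense set of points. Since $X$ is a K3 surface we have $\mathrm{Alb}(X)=0$, whence by Roitman's theorem the group $\CH_0(X)_0$ is torsion-free. It is therefore enough to show that $f(x)-x$ is torsion for general $x$. The clean mechanism producing torsion is the following: if $x$ lies on an $f$-invariant smooth elliptic curve $E$ on which $f$ restricts to a fixed-point-free involution, i.e.\ to translation by a $2$-torsion point, then $f(x)-x$ is $2$-torsion in $\Pic^0(E)$, so its image in $\CH_0(X)$ is $2$-torsion, hence $0$. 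If such curves cover a dense open subset of $X$, their points generate $\CH_0(X)$ and we are done.

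The existence of the relevant curves rests on a class special to $\widetilde\Lambda_d$. For $d\equiv 0(2)$ there is $v\in E_8(-2)$ with $(v.v)=-2d$ such that $w:=(\ell+v)/2\in\widetilde\Lambda_d$ is isotropic; then $f(w)=(\ell-v)/2$, so that $w+f(w)=\ell$ and $w\cdot f(w)=d$. Since $\NS(X)^{f}=\ZZ\ell$ for the generic member and $\ell^2=2d\neq 0$, there is no isotropic invariant class, hence no invariant pencil with invariant fibre class on the generic $X$; this is exactly what restricts the method to the $\widetilde\Lambda_d$-component and accounts for one third of all cases. I would then realise special members as elliptic K3 surfaces $\pi\colon X_0\rightarrow\PP^1$ with fibre class $w$ carrying a $2$-torsion Mordell--Weil section $\tau$, with $f$ equal to fibrewise translation by $\tau$. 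Every smooth fibre is then an $f$-invariant elliptic curve on which $f$ acts as the fixed-point-free translation $\tau$, and these cover $X_0$; the first paragraph gives $f^{*}=\id$ on $\CH_0(X_0)$ for all such special members.

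To reach the generic $(X,f)\in\gm_{\widetilde\Lambda_d}$, where $w$ is no longer an invariant fibre class but only yields pairs of elliptic curves $E$ and $f(E)$ of classes $w$ and $f(w)$ with $E+f(E)\in|L|$, the plan is to spread the covering families across the whole moduli space by means of stable maps. One studies the moduli of genus-one stable maps of class $w$ relatively over $\gm_{\widetilde\Lambda_d}$ and deforms the fibres of the special elliptic K3 surfaces to a covering family of elliptic curves on the generic member, compatibly with $f$. The torsion argument is then run in families: the self-correspondence $\Gamma_f-\Delta_X$ is shown to act on the relative $\CH_0$ by a $2$-torsion, hence vanishing, class, using the paired curves and the relation $[E]+[f(E)]=\ell$ in place of the fibrewise translation available on the special locus.

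The main obstacle is precisely this transfer. First, a dimension count in Gromov--Witten theory must guarantee that the genus-one stable maps of class $w$ (and of the paired class $f(w)$) deform out of the special elliptic locus and sweep out the generic $X$ through a general point, so that a genuine covering family survives; the boundary of the space of stable maps and the compatibility with the $f$-action have to be controlled. Second, on the generic member $f$ no longer preserves the individual elliptic curves, so the $2$-torsion relation forcing $f(x)-x=0$ is not visible from the generic geometry alone; it has to be propagated to the generic fibre from the genuinely invariant fibres on the special elliptic K3 surfaces, and making this specialisation of the action of $\Gamma_f-\Delta_X$ rigorous is the heart of the argument. By contrast, once the covering family of elliptic curves is in hand, the torsion-freeness of $\CH_0(X)_0$ makes the final vanishing immediate.
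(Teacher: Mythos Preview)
Your first paragraph is exactly right and matches the paper's Proposition~\ref{prop:Roit}: Roitman reduces the problem to producing a dominating family of $f$-invariant integral genus-one curves avoiding ${\rm Fix}(f)$, on which the quotient by $f$ is again genus one and hence $[x]-[f(x)]$ is torsion. The specialization from generic to arbitrary $(X,f)$ at the very end is also how the paper concludes.

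The gap is in paragraphs two and three. Your class $w=(\ell+v)/2$ is \emph{not} $f$-invariant: the action of $f$ on $H^2$ is fixed once and for all ($+\id$ on $\ZZ\ell$, $-\id$ on $E_8(-2)$), so $f^*w=(\ell-v)/2\neq w$ on every member of the family, special or generic. In particular your ``special elliptic K3 with fibre class $w$ and $f$ equal to translation by a $2$-torsion section'' cannot exist: translation preserves the fibre class, forcing $f^*w=w$, i.e.\ $v=0$, which is excluded. Consequently, after you deform your genus-one stable maps of class $w$ to the generic member, the resulting curves $E$ are never $f$-invariant; $f(E)$ sits in a different linear system, and the torsion mechanism of your first paragraph simply does not apply to a point $x\in E$ and $f(x)\in f(E)$. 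Your proposed remedy---``propagate'' the Chow relation from the special fibre to the generic one---goes in the wrong direction: rational equivalence specializes, it does not generalize.

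The paper avoids this by never leaving the $f$-invariant world. It works with curves in the \emph{invariant} class $\ell$ (not an isotropic $w$) and, crucially, passes to the quotient $\bar X=X/\langle f\rangle$. On a particular rank-$10$ elliptic K3 in $\gm_{\widetilde\Lambda_d}$ one writes down a reducible $f$-invariant curve $C=eN+F+\sigma+\tau\in|\ell|$ avoiding ${\rm Fix}(f)$; its image $\bar C\subset\bar X$ is hit by an explicit unramified genus-one stable map $h:D\to\bar X$ for which $\km_1(\bar X,\bar L)$ has the expected dimension one. Deforming $h$ over the family $\bar\kx\to S$ produces, on the generic quotient $\bar\kx_t$, a dominating family of \emph{integral} genus-one curves; their preimages in $\kx_t$ are then integral $f$-invariant genus-one curves disjoint from ${\rm Fix}(f)$, and Proposition~\ref{prop:Roit} applies directly on the generic member. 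The passage through the quotient is what turns ``covering family of genus-one curves'' into ``covering family of $f$-invariant genus-one curves'', and it is exactly the step your outline is missing.
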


 \subsection{}
For $d=1$ (double covers of $\PP^2$) and $d=2$ (quartics in $\PP^3$) the conjecture is known to hold, see \cite{Chatz,Ped,Voi}. For $d=3$ (complete intersection
of a cubic and a quadric in $\PP^4$) an interesting approach is outlined in \cite{GT}.
Theorem 3.2 in \cite{Voisin2} proves the conjecture for equivariant
complete intersections in varieties with trivial Chow groups.

In \cite{HuyMSRI} the conjecture has been proven for $(X,f)$
in dense subsets  of $\gm_{\Lambda_d}$ and $\gm_{\widetilde \Lambda_d}$.
The proof there relies on Fourier--Mukai equivalences of the bounded derived category
of coherent sheaves on $X$ and it is not clear how to push the techniques further to cover
generic and hence arbitrary $(X,f)$.

The techniques to prove Theorem \ref{thm:main} can be applied to
symplectic automorphisms $f:X\congpf X$ of order $>2$. If the order of
$f$ is a prime $p$, then $p=2,3,5$, or $7$ (cf.\ \cite{Nik}), and  the results of \cite{vGS}
have in \cite{GS} successfully be generalized to cover also the cases
$p=3,5$, and $7$. Our methods prove Conjecture \ref{conj:BB} for many components
of the moduli space of $(X,f)$ in these cases. For a few more comments see Section \ref{sec:p357}.

\subsection{}

The proof of Theorem \ref{thm:main}  neither uses derived categories as
in  \cite{HuyMSRI} nor any deep cycle arguments as e.g.\ in \cite{GT}.
As we shall explain in Section \ref{sec:elliptic}, it is enough to find
a dominating family of integral genus one curves on $X$ 
that  are invariant under $f$ and avoid the fixed points of $f$. The conjecture is then
deduced from the absence of torsion in $\CH^2(X)$. It is not clear whether
the existence of such a family should be expected in general, but
it  will be shown here for generic K3 surfaces parametrized by
points in $\gm_{\widetilde\Lambda_d}$. This is done in two steps. Firstly,
we construct a family of genus one (reducible)  curves on a particular elliptic K3 surface for which $f$ 
is given by translation by a two-torsion section, see Section \ref{sec:specialell}. Then, the
theory of stable maps is applied to obtain the desired family for generic $X$.

The missing piece to prove Conjecture \ref{conj:BB} in full generality,
or at least for symplectic involutions, is the lack of special K3 surfaces  in $\gm_{\Lambda_d}$
for which appropriate families of genus one curves can be described explicitly.

{\bf Acknowledgments:} We thank Richard Thomas for a useful discussion
concerning  Section \ref{sec:defo}.

\section{Covering families of elliptic curves}\label{sec:elliptic}
 
 Let $f:X\congpf X$ be a symplectic automorphism of finite order and denote
 its quotient by $\bar X:=X/\langle f\rangle$, which is a singular K3 surface. Moreover, if $f$ has prime order $p$,  then $p=2,3,5,$ or $7$ (see \cite{Nik}). In order to prove Conjecture \ref{conj:BB} for symplectic automorphisms of finite order (and we do not have anything to say for automorphisms of infinite order), one can restrict to those.
 The number of fixed points of $f$, all isolated, can be
 determined by the Lefschetz fixed point formula. E.g.\ for symplectic involution, i.e.\
 $p=2$, there are exactly eight fixed points.    
 
In the following, a family $\kc_t\subset X$ of curves given by $\kc\subset S\times X$ is called dominating if
 the projection $\kc\to X$ is dominant, i.e.\ the curves $\kc_t$ parametrized by the closed point
 $t\in S$ cover a Zariski open subset of $X$.
 
 \begin{proposition}\label{prop:Roit}
 Let $f:X\congpf X$ be a symplectic automorphism. Assume there exists  a dominating
 family of integral $f$-invariant curves $\kc_t\subset $X of geometric genus one with
 $\kc_t\cap {\rm Fix}(f)=\emptyset$ for generic $t$. Then $f^*=\id$ on $\CH^2(X)$. 
 \end{proposition}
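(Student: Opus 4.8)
The plan is to reduce the statement to a computation on a single genus one curve and then to invoke the torsion-freeness of $\CH^2(X)_0$. Since $f$ preserves degrees and sends rational curves to rational curves, it fixes the Beauville--Voisin class of a point lying on a rational curve; hence it suffices to prove $f_*=\id$, equivalently $f^*=\id$, on the degree zero part $\CH^2(X)_0=\CH_0(X)_0$. After shrinking the base $S$, I may assume that every member $\kc_t$ is integral of geometric genus one, $f$-invariant and disjoint from ${\rm Fix}(f)$, and that the curves sweep out a dense open $U\subset X$. By the moving lemma for zero-cycles on the smooth variety $X$, every class in $\CH_0(X)$ is represented by a cycle supported on $U$, so $\CH^2(X)_0$ is generated by differences $[x]-[y]$ with $x,y\in U$. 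It therefore suffices to show $[f(x)]=[x]$ in $\CH^2(X)$ for every $x\in U$.

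First I would fix such an $x$ and choose $t$ with $x\in\kc_t$. Let $\nu\colon E\to\kc_t\subset X$ be the normalization, so that $E$ is a smooth projective curve of genus one, and let $\tilde x\in E$ be a point above $x$. Because $\kc_t$ is $f$-invariant, $f$ restricts to an automorphism of $\kc_t$ which lifts through $\nu$ to an automorphism $\tilde f\colon E\to E$. Any fixed point of $\tilde f$ would map under $\nu$ to a fixed point of $f$ on $\kc_t$; as $\kc_t\cap{\rm Fix}(f)=\emptyset$, the automorphism $\tilde f$ is fixed-point-free. Choosing a base point to regard $E$ as an elliptic curve and writing $\tilde f=t_b\circ\rho$ with $\rho$ fixing the origin, the endomorphism $\rho-\id$ is an isogeny whenever $\rho\neq\id$ and hence surjective, producing a fixed point; thus $\rho=\id$ and $\tilde f=t_b$ is translation by some $b\neq 0$. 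Since $f$ has finite order $n$, so does $\tilde f$, and therefore $b$ is $n$-torsion.

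The cycle $[\tilde f(\tilde x)]-[\tilde x]=[\tilde x+b]-[\tilde x]$ has degree zero, and under the identifications $\CH_0(E)_0\cong\Pic^0(E)\cong E$ it corresponds to $b$. Pushing forward along $\nu$ gives
\[
[f(x)]-[x]=\nu_*\bigl([\tilde x+b]-[\tilde x]\bigr)\in\CH^2(X)_0,
\]
an $n$-torsion class. Now $X$ is a K3 surface, so ${\rm Alb}(X)=0$, and by Roitman's theorem the torsion subgroup of $\CH_0(X)_0$ injects into ${\rm Alb}(X)$; hence $\CH^2(X)_0$ is torsion-free. Therefore $[f(x)]-[x]=0$, and running over all $x\in U$ yields $f_*=\id$ on $\CH^2(X)_0$ and thus $f^*=\id$ on $\CH^2(X)$.

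The main obstacle is the passage from the surface to the curve: one must arrange, by shrinking $S$, that the generic good behaviour of the family holds uniformly, verify that the induced automorphism of the normalization is genuinely fixed-point-free so that it is forced to be a translation, and ensure that its translation parameter is torsion by exploiting the finite order of $f$. Once the relevant cycle is identified as torsion, the conceptual heart of the argument is Roitman's theorem, which converts this into an honest rational equivalence; the globalization from $U$ to all of $\CH^2(X)_0$ is then a routine application of the moving lemma.
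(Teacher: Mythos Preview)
Your proof is correct and follows essentially the same strategy as the paper's: reduce to showing $[x]=[f(x)]$ for generic $x$, lift $f$ to a fixed-point-free automorphism of the genus-one normalization, identify $[\tilde f(\tilde x)]-[\tilde x]$ as torsion in $\Pic^0$, and conclude via Roitman's theorem. The only cosmetic difference is that the paper phrases the torsion step by observing that the quotient $C\to C/\langle\tilde f\rangle$ is an isogeny of elliptic curves, whereas you argue directly that a fixed-point-free automorphism of finite order on an elliptic curve is translation by a torsion point; your opening reduction via the Beauville--Voisin class is unnecessary (degree preservation already handles the $\ZZ$-summand) but harmless.
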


\begin{proof} It suffices to prove that for generic $x\in X$ the points $x$ and $y:=f(x)$ are rationally equivalent, i.e.\ $[x]=[y]$ in $\CH^2(X)$. Since by Roitman's theorem
 $\CH^2(X)$ is torsion free (see e.g.\ \cite[Ch.\ 22]{Voisin}), the latter
 is equivalent to $[x]-[y]$ being torsion. For any morphism $g:C\to X$  from a smooth irreducible curve $C$ the induced $g_*:\Pic(C)=\CH^1(C)\to \CH^2(X)$ is a group homomorphism. Thus, if there exist lifts $\tilde x,\tilde y\in C$ of $x$ resp.\ $y$  such that $\ko(\tilde x-\tilde y)\in\Pic^0(C)$ is a torsion line bundle, then automatically $[x]=[y]$ in $\CH^2(X)$.

By assumption, any generic closed point $x\in X$ lies on one of the curves $\kc_t$.
Since the curves $\kc_t$ are assumed to be $f$ invariant, $y=f(x)$ is contained in the same curve and $f$ lifts to an automorphism $\tilde f$ of the normalization
$C:=\tilde\kc_t$, which is a smooth integral curve of genus one.
As $\kc_t$ avoids the fixed points of $f$, the automorphism
$\tilde f:C\congpf C$ is fixed point free and hence $D:=C/\langle\tilde f\rangle$
is smooth of genus one, too. After choosing origins for $C$ and $D$ appropriately,
$C\to D$ is a morphism of elliptic curves which can be viewed as a quotient of
$C$ by a finite subgroup $\Gamma\subset C\cong\Pic^0(C)$. Hence, points in the same fibre of $C\to D$ differ by elements of $\Gamma$. In particular, $\ko_C(\tilde x-\tilde y)\in\Gamma\subset\Pic^0(C)$ is a torsion line bundle.\end{proof}

The problem now becomes to construct a family of genus one curves as required. We do not
know how to do this directly. On the special elliptic surface considered
in  Section \ref{sec:specialell} a family of genus one curves is constructed, but the curves
are not integral. They become integral only after deformations to the generic case.

%

\section{Stable maps to K3 surfaces}\label{sec:defo}
 
 Let $\kx\to S$ be an irreducible family of K3 surfaces with a global line bundle $\kl$.
 Consider  the moduli stack $\km_g(\kx,\kl)\to S$ of stable maps $h:D\to \kx_t$
 to fibres of $\kx\to S$ such that $D$  is of arithmetic genus $g$ with $h_*(D)\in|\kl_t|$.
 The stack structure of $\km_g(\kx,\kl)$ is of no importance to us, so we shall
 ignore it and treat $\km_g(\kx,\kl)$ as a moduli space. If we do not want to fix the
 linear equivalence class of the image curves, we simply write $\km_g(\kx)$.
 
 The following fact has been used in various contexts in the literature, but mostly for $g=0$
 (see e.g.\ \cite{BHT,LL}). We shall need the following statement for $g=1$.
  
 \begin{proposition}\label{prop:defo}
 Every irreducible component of $\km_g(\kx,\kl)$ is of dimension at least
 $g+\dim (S)$.
 \end{proposition}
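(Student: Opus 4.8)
The plan is to bound the dimension of $\km_g(\kx,\kl)$ at an \emph{arbitrary} point by the standard deformation-theoretic lower bound: if the local structure of a moduli space at a point is controlled by a tangent space $T^1$ and an obstruction space $T^2$, then every irreducible component through that point has dimension at least $\dim T^1-\dim T^2$. It therefore suffices to fix a stable map $h\colon D\to\kx_{t_0}$ with $D$ of arithmetic genus $g$ and $h_*[D]\in|\kl_{t_0}|$, to set up its tangent--obstruction theory relative to $\kx\to S$, and to show $\dim T^1-\dim T^2\ge g+\dim(S)$. Two separate numerical inputs will produce the two summands: a fibrewise contribution $g$ and a base contribution $\dim(S)$.

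For the fibrewise part I would use the tangent--obstruction theory of the stable map into the single K3 surface $\kx_{t_0}$, governed by the hypercohomology of $[\,T_D\to h^*T_{\kx_{t_0}}]$ (for nodal or reducible $D$ one replaces this by the corresponding $\Ext$-groups of the cotangent complex of $h$, which I treat as a black box). A Riemann--Roch computation, using $\dim\kx_{t_0}=2$ and $c_1(T_{\kx_{t_0}})=0$, gives the Euler characteristic $\dim T^1_{\mathrm{fib}}-\dim T^2_{\mathrm{fib}}=(2-3)(1-g)=g-1$, the familiar virtual dimension of $\overline{M}_g$ of a K3 already exploited for $g=0$ in \cite{BHT,LL}. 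The crucial extra $+1$ comes from the holomorphic symplectic form $\sigma$: the isomorphism $\sigma\colon T_{\kx_{t_0}}\xrightarrow{\ \sim\ }\Omega_{\kx_{t_0}}$ induces a semiregularity map from the fibrewise obstruction space to $H^2(\kx_{t_0},\O)\cong H^0(\kx_{t_0},\Omega^2)^\vee\cong\CC$, the existence of which forces the genuine obstructions into a subspace of codimension one. Passing to the resulting reduced obstruction theory lowers $\dim T^2_{\mathrm{fib}}$ by one, so the reduced fibrewise virtual dimension is $g$ rather than $g-1$.

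For the base part I would use the exact sequence relating the theory of $\km_g(\kx,\kl)$ to the fibrewise theory,
\begin{eqnarray*}
0\to T^1_{\mathrm{fib}}\to T^1\to T_{t_0}S\stackrel{\mathrm{ob}}{\lra}T^2_{\mathrm{fib}},
\end{eqnarray*}
in which $\mathrm{ob}$ sends $v\in T_{t_0}S$ to the obstruction against deforming $h$ sideways in the direction $v$. Its composite with the semiregularity map is the Kodaira--Spencer pairing $v\mapsto\langle v,c_1(\kl_{t_0})\rangle\in H^2(\kx_{t_0},\O)$, i.e.\ the condition that $h_*[D]=c_1(\kl_{t_0})$ cease to be of type $(1,1)$ along $v$. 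Since $\kl$ is a \emph{global} line bundle on $\kx$, its first Chern class stays algebraic on every fibre, so this pairing vanishes identically and $\mathrm{ob}=0$. Hence all of $T_{t_0}S$ lifts to $T^1$, and with the reduced obstruction space one obtains
$$
\dim_{[h]}\km_g(\kx,\kl)\ \ge\ \dim T^1-\dim T^2_{\mathrm{red}}\ =\ \bigl(\dim T^1_{\mathrm{fib}}+\dim S\bigr)-\bigl(\dim T^2_{\mathrm{fib}}-1\bigr)\ =\ g+\dim(S).
$$

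The main obstacle I anticipate is making the symplectic $+1$ rigorous \emph{simultaneously} with the family structure, that is, checking that reducing $T^2_{\mathrm{fib}}$ along the one-dimensional symplectic direction and freeing all of $T_{t_0}S$ do not concern the same piece of the obstruction space twice. The clean resolution is that the vanishing $\mathrm{ob}=0$ coming from the global $\kl$ is precisely what keeps the symplectic direction of $T^2_{\mathrm{fib}}$ free of obstructions and hence available for the reduction; conversely, had the class failed to stay algebraic along some $v$, the nonzero obstruction $\mathrm{ob}(v)$ would occupy exactly that direction and the gain in the base would be cancelled by the loss in the reduction. A secondary technical point is that $D$ may be singular or reducible, as it will be in the application of Section \ref{sec:specialell}; this forces the whole argument to be phrased through the cotangent complex of the stable map rather than a normal bundle, but leaves the numerology untouched, since the virtual dimension and the semiregularity map are deformation invariant and depend only on the arithmetic genus $g$.
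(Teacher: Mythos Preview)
Your approach via reduced obstruction theory is the alternative the paper explicitly mentions (citing \cite{KT}) but does not carry out. Instead, the paper uses an elementary thickening trick: embed $S$ into a base $\tilde S$ of one higher dimension carrying an extension $\tilde\kx\to\tilde S$ of $\kx\to S$, chosen so that $\kl$ does \emph{not} extend transversally to $S\subset\tilde S$ (e.g.\ via the twistor family for the K\"ahler class ${\rm c}_1(\kl_t)$). The naive bound $g-1+\dim\tilde S=g+\dim S$ then applies to $\km_g(\tilde\kx)$; and since $h_*(D)={\rm c}_1(\kl_0)$ fails to remain of type $(1,1)$ in the transverse direction, $h$ cannot deform there either, so $\km_g(\tilde\kx)$ and $\km_g(\kx,\kl)$ coincide near $[h]$. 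The extra transverse parameter is precisely a geometric avatar of your one-dimensional semiregularity target $H^2(\kx_0,\ko)$; what the trick buys is an argument that avoids setting up reduced obstruction theory for stable maps from nodal domains, which is exactly the black box you flag.

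One slip in your execution: from the vanishing of the Kodaira--Spencer pairing you conclude $\mathrm{ob}=0$, but all that follows is that the image of $\mathrm{ob}$ lies in the kernel of the semiregularity map. This does not justify $\dim T^1=\dim T^1_{\mathrm{fib}}+\dim S$ as your displayed equation asserts. The conclusion nevertheless survives: the long exact sequence yields $\dim T^1-\dim T^2=(g-1)+\dim S$ regardless of $\mathrm{ob}$, and since $\mathrm{ob}$ lands in the kernel of semiregularity, that map descends to $T^2$, so $\dim T^2_{\mathrm{red}}=\dim T^2-1$ and the bound $g+\dim S$ follows. Your ``main obstacle'' paragraph already contains the correct trade-off, just not quite the correct bookkeeping.
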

 
 \begin{proof}
 The starting point is \cite[Thm.\ 2.17]{Kol}: For simplicity let $\pi:\kx\to S$ be 
 a smooth projective family  over an irreducible base $S$ and
let ${\mathcal D}\to S$ be a flat and projective family of curves. 
Every irreducible component of ${\rm Mor}_S({\mathcal D},\kx)$ containing a
morphism $h:D:={\mathcal D}_0\to X:=\kx_0$  is of dimension at least
 \begin{equation}\label{eqn:Kollar}
 \chi(D,h^*\kt_ X)+\dim (S).
 \end{equation}
 The first term $\chi(D,h^*\kt_ X)=h^0(D,h^*\kt_ X)-h^1(D,h^*\kt_ X)$ reflects the usual
 defor\-mation-obstruction theory for the morphism $h:D\to X$. A priori, the obstructions 
 to deform the morphism  $h:D\to\kx$ 
 are contained in $H^1(D,h^*\kt_\kx)$,
 which is part of an exact sequence $$\ldots\to H^1(D,h^*T_X)\to H^1(D,h^*\kt_\kx)\to H^1(D,h^*\pi^*\kt_S)\to0.$$
 Since the morphism $D\to X\subset \kx\to S$ is constant, there are no obstructions to deform
 it sideways at least when $S$ is smooth. In other words, the obstructions to deform
 $h:D\to\kx$ are contained in the image of $H^1(D,h^*\kt_X)$ which leads to the stronger
 bound in (\ref{eqn:Kollar}).
 
 A similar argument allows one to treat the case of varying domain $D$. The usual obstruction theory for stable maps shows that $\km_g(X,L)$ in $[h:D\to X]$ is of dimension
 at least $\chi(D,(h^*\Omega_X\to\Omega_D)^*)$, where the two term complex
 $h^*\Omega_X\to \Omega_D$ is concentrated in degree $-1$ and $0$, see
 \cite{GHS}. For $X=\kx_0$ in a family $\kx\to S$, the analogue
 of (\ref{eqn:Kollar}) then says that $\km_g(\kx)$ in a point corresponding to a stable map $h:D\to X$ is of dimension at least
 \begin{equation}\label{}
 \chi(D,(h^*\Omega_X\to\Omega_D)^*)+\dim (S)=g-1+\dim (S)
 \end{equation}  The last equation follows from a standard Riemann--Roch
 calculation.
 
 The remaining issue is to increase the bound by restricting to families $\kx\to S$
 which come with a deformation $\kl$ of $L:=\ko(h_*(D))$. 
 One can either evoke reduced deformation theory for K3 surfaces as developed
 recently in \cite{KT} in great detail or use the following trick.
 
 Any given family $(\kx,\kl)\to S$ with a polarization $\kl$ can be thickened to a family
 $\tilde\kx\to\tilde S$ with $\dim\tilde S=\dim S+1$ such that
 transversally to $S\subset\tilde S$ the line bundle $\kl$ is obstructed (even to first order).
 More precisely, for $t\in S$ the line bundle $\kl_t$ on $\kx_t$ deforms to first order in the
 direction of $v\in T_{\tilde S,t}$ if and only if $v\in T_{S,t}\subset T_{\tilde S,t}$.
 If $\kl$ is fibrewise ample, then the thickening $\tilde\kx\to\tilde S$ can be explicitly
 described by using the twistor space  construction for each fibre $\kx_t$ and the K\"ahler class  given by ${\rm c}_1(\kl_t)$. (Note that in particular, $\tilde \kx\to\tilde S$ will in general not be projective.) Otherwise, one uses the standard deformation theory of K3 surfaces to
 produce such a family at least locally, which is enough for the following dimension count.
 
 By the discussion above, $\km_g(\tilde\kx)$ is in $[h:D\to X]$ of dimension
 $$g-1+\dim(\tilde S)=g+\dim (S).$$ On the other hand, $h:D\to X$ cannot deform sideways
 in a tangent direction $v\in T_{\tilde S,0}$ that is not contained in $T_{S,0}$, because
  $\ko(h_*(D))=\kl_0$. This shows that the two moduli spaces
  $\km_g(\tilde \kx)$ and $\km_g(\kx)$ coincide near the point given by $[h:D\to X]$.
%
%
 \end{proof}

 \begin{corollary}\label{cor:defo}
 Suppose the fibre $\km_0$ of an irreducible component $\km\subset\km_g(\kx,\kl)$
 is of dimension $\leq g$ for some $0\in S$. Then $\km$ dominates $S$.\qqed
 \end{corollary}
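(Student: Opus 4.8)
The plan is a short dimension count, playing the universal lower bound of Proposition~\ref{prop:defo} against the standard theorem on the dimension of the fibres of a morphism. Write $\pi\colon\km\to S$ for the restriction of the structure morphism to the component $\km$. First I would observe that, since $\km_0=\pi^{-1}(0)$ is assumed to have \emph{finite} dimension $\leq g$, it is in particular non-empty, so the point $0$ lies in the image $\pi(\km)$.

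Next, let $Z:=\overline{\pi(\km)}\subseteq S$ be the closure of the image. As $\km$ is irreducible (being an irreducible component), so is $Z$. Regarding $\pi$ as a dominant morphism $\km\to Z$, the theorem on the dimension of fibres gives, for every point of $\pi(\km)$ and in particular for $0$, the lower bound
\[
\dim\km_0\ \geq\ \dim\km-\dim Z.
\]
Combining this with $\dim\km\geq g+\dim(S)$ from Proposition~\ref{prop:defo} and with the hypothesis $\dim\km_0\leq g$ yields
\[
g\ \geq\ \dim\km_0\ \geq\ \bigl(g+\dim(S)\bigr)-\dim Z,
\]
whence $\dim Z\geq\dim(S)$.

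Finally I would invoke the irreducibility of $S$: an irreducible closed subset $Z\subseteq S$ with $\dim Z\geq\dim(S)$ must equal $S$, so $\pi(\km)$ is dense in $S$ and $\km$ dominates $S$. There is essentially no hard step here; the only point that requires care is to use the fibre-dimension estimate in the correct direction. It is the \emph{lower} bound $\dim\km_0\geq\dim\km-\dim Z$, valid over the entire image rather than merely over a generic point, that does the work, since this is precisely what allows the small special fibre $\km_0$ to force $Z$ to fill out all of $S$.
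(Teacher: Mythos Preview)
Your argument is correct and is exactly the dimension count the paper has in mind: the corollary is stated with an immediate \qqed\ and no written proof, precisely because it follows at once from Proposition~\ref{prop:defo} by the fibre-dimension inequality you spell out. There is nothing to add or to compare.
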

 
In other words, if the moduli space $\km_g(\kx_0,\kl_0)$ of stable maps
to one fibre $\kx_0$ has the expected dimension $g$ in $[h:D\to \kx_0]$, then $h$ can be deformed to a stable map $h_t:D_t\to \kx_t$ to the generic fibre. To ensure that the condition is met, we shall later use the following criterion, c.f.\ \cite[Cor.\ 1.2.5]{Kem} and \cite[Lem.\ 2.6]{LL}.

\begin{proposition}\label{prop:defounramified}
 Suppose the stable map $h:D\to X$ 
satisfies the following conditions:\\
i) If  $D_1,D_2,\ldots,D_{n}$ are the components of $D$, then $D_2,\ldots,D_n$ are smooth rational.\\
ii) The first component $D_1$ is smooth of genus $g$ and $h|_{D_1}:D_1\to X$ is an embedding.\\
iii) The morphism $h$ is unramified.\\
iv) Two components $D_i$ and $D_j$ intersect transversally in one point if $|i-j|=1$
and not at all otherwise.

Then  $\km_g(X)$ is of dimension $g$ in $[h:D\to X]$.
\end{proposition}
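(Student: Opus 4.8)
The plan is to bound the Zariski tangent space of $\km_g(X)$ at the point $[h\colon D\to X]$ from above by $g$. Since the local dimension of a scheme or stack never exceeds the dimension of its tangent space, this already gives $\dim_{[h]}\km_g(X)\le g$, and the reverse inequality is the lower bound $\ge g$ furnished by the (reduced) deformation theory in Proposition \ref{prop:defo} applied with $S$ a point. Deformations of the stable map are governed by the two–term complex $G^{\bullet}=(h^{*}\Omega_X\to\Omega_D)$ used in the proof of Proposition \ref{prop:defo}: the tangent space is $\Ext^{1}(G^{\bullet},\ko_D)$, the obstructions lie in $\Ext^{2}(G^{\bullet},\ko_D)$, and stability forces $\Ext^{0}(G^{\bullet},\ko_D)=0$. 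The Riemann--Roch calculation recorded there identifies the expected dimension, so that $\dim\Ext^{1}-\dim\Ext^{2}=g-1$. Hence the whole assertion reduces to the single claim that the obstruction space has minimal possible dimension, $\dim_{\CC}\Ext^{2}(G^{\bullet},\ko_D)=1$: this forces $\dim\Ext^{1}=g$, whence $\dim_{[h]}\km_g(X)\le g$, and $[h]$ is in fact a smooth point of dimension $g$.

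To compute $\Ext^{2}(G^{\bullet},\ko_D)$ I would apply $\Ext^{\bullet}(-,\ko_D)$ to the exact triangle $\Omega_D\to G^{\bullet}\to h^{*}\Omega_X[1]$. Using that $\Ext^{2}(\Omega_D,\ko_D)=0$ on the curve $D$ (as $\Omega_D$ is of projective dimension $\le 1$), this yields
\begin{equation*}
\Ext^{2}(G^{\bullet},\ko_D)\;\cong\;\Cok\bigl(\Ext^{1}(\Omega_D,\ko_D)\stackrel{\beta}{\lra} H^{1}(D,h^{*}\kt_X)\bigr),
\end{equation*}
where $\Ext^{1}(\Omega_D,\ko_D)$ is the space of first–order deformations of the nodal curve $D$ (node–smoothings included) and $\beta$ is the composition of the Kodaira--Spencer map with $dh$. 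This is where the holomorphic symplectic form enters: the trivialisation $\omega_X\cong\ko_X$ identifies $\kt_X\cong\Omega_X$, and Serre duality on $D$ turns $\Cok(\beta)$ into the dual of $H^{0}$ of the conormal sheaf of $h$ twisted by $\omega_D$. The section coming from $\sigma$ is the one obstruction that can never be removed — this is exactly why the unreduced theory is obstructed and the expected dimension jumps from $g-1$ to $g$ — so $\dim\Cok(\beta)\ge 1$ in all cases.

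The real content is thus the upper bound $\dim\Cok(\beta)\le 1$, equivalently that the deformations of the domain $D$ together with the single symplectic direction span all of $H^{1}(D,h^{*}\kt_X)$. This is where hypotheses i)--iv) are used together. Geometrically it is the statement about the normal sheaf $N_h$: because $h$ is unramified (iii) the differential $dh\colon\kt_D\to h^{*}\kt_X$ is injective, and because the components meet in a transverse chain (iv) with the main component embedded (ii), the image is a nodal curve whose nodes match those of $D$, so that adjunction on the K3 surface gives, morally, $N_h\cong\omega_D$ and hence $h^{0}(N_h)=h^{0}(\omega_D)=p_a(D)=g$. Making this rigorous is the main obstacle: one must control $N_h$ — equivalently the conormal sheaf — at the nodes of $D$, where $\Omega_D$ fails to be locally free, and verify that node–smoothings are correctly accounted for as sections nonvanishing at the nodes, so that the only surviving class in $\Cok(\beta)$ is the symplectic one. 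This nodal bookkeeping, in which all four conditions are indispensable, is precisely the content of \cite[Cor.\ 1.2.5]{Kem} and \cite[Lem.\ 2.6]{LL}; granting it, $\dim\Ext^{2}=1$ and the proposition follows.
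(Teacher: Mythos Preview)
Your framework is sound, but you take a detour that the paper avoids, and in doing so you leave the central computation unproved.  You reduce everything to showing $\dim\Ext^{2}(G^{\bullet},\ko_D)=1$, establish the easy inequality $\ge 1$ via the symplectic form, and then hand off the bound $\le 1$ to \cite{Kem,LL}.  That is precisely the step where conditions i)--iv) do all the work, so what remains is not a proof but a citation.

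The paper's argument is shorter and entirely self-contained because it exploits a consequence of unramifiedness you mention only in passing: when $h$ is unramified, the complex $G^{\bullet}=(h^{*}\Omega_X\to\Omega_D)$ is quasi-isomorphic to a \emph{line bundle} $\kn_h^{\vee}$ in degree $-1$ (one checks this locally at the nodes, where transversality of the image branches makes the kernel of $h^{*}\Omega_X\twoheadrightarrow\Omega_D$ free of rank one).  Hence $\Ext^{i}(G^{\bullet},\ko_D)=H^{i-1}(\kn_h)$, and there is no need to pass through Serre duality or the cokernel $\Cok(\beta)$.  One then computes $H^{i}(\kn_h)$ directly by induction on $n$, peeling off the last rational tail via
\[
0\to\kn_h(-x)|_{D'}\to\kn_h\to\kn_h|_{D_n}\to0,
\]
where $D'=D_1\cup\cdots\cup D_{n-1}$ and $\{x\}=D_{n-1}\cap D_n$.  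A short computation with $\Omega_D|_{D_n}\cong\ko(-1)$ gives $\kn_h|_{D_n}\cong\ko(-1)$, so both $H^i$ vanish on the tail and $H^{i}(\kn_h)\cong H^{i}(\kn_h(-x)|_{D'})=H^{i}(\kn_{h'})$ with $h'=h|_{D'}$.  Iterating, one lands on $H^{i}(\kn_{D_1/X})$; adjunction on the K3 gives $\kn_{D_1/X}\cong\omega_{D_1}$, whence $h^{0}=g$, and deformations of the embedded curve $D_1\subset X$ are unobstructed (they are the linear system $|D_1|$).  This is the ``nodal bookkeeping'' you allude to, carried out explicitly rather than outsourced; it also makes transparent the geometric remark that all deformations of $h$ come from moving $D_1$.

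In short: your route via $\Ext^{2}$ is correct but circuitous, and the missing step is exactly the one the paper's induction handles in a few lines once $\kn_h$ is recognised as a line bundle.
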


\begin{proof}
We copy the argument from \cite[Lem.\ 2.7]{BHT}. First of all,
since $h$ is unramified, the complex $h^*\Omega_X\to\Omega_D$ is a locally free
sheaf of rank one concentrated in degree $-1$, the dual of which is denoted $\kn_h$.
Then, one proceeds by induction over $n$ and uses the exact sequence
$$0\to\kn_h(-x)|_{D'}\to \kn_h\to\kn_h|_{D_n}\to0,$$
where $D':=D_1\cup\ldots\cup D_{n-1}$ and $\{x\}=D_{n-1}\cap D_n$.
From the exact sequence $$0\to\kn_h^*|_{D_n}\to h^*\Omega_X|_{D_n}\to
\Omega_D|_{D_n}\to 0$$ and $\Omega_D|_{D_n}\cong\ko(-1)$,
one deduces  $\kn_h|_{D_n}\cong\ko(-1)$.
Thus, $H^i(\kn_h)\cong H^i(\kn_h(-x)|_{D'})$. On the other hand, $\kn_h(-x)|_{D'}=\kn_{h'}$,
where $h':=h|_{D'}:D'\to X$.  By induction this eventually yields  
$H^i(\kn_h)\cong H^i(\kn_{D_1/X})$. But clearly, $h^0(\kn_{D_1/X})=h^0(D_1,\omega_{D_1})=g$ and the deformations of $D_1\subset X$ are unobstructed.
\end{proof}

\begin{remark} Maybe more geometrically, the arguments show 
that deformations of $h:D\to X$ are all given by deforming $D_1\subset X$.
 \end{remark}
\section{Special elliptic surfaces}\label{sec:specialell}

We follow \cite[Sect.\ 4]{vGS} for the construction of an elliptic K3 surface
$X\to \PP^1$ with a symplectic involution given by a two-torsion section. Deformations of $X$ will lead to K3 surfaces with N\'eron--Severi group
$\widetilde\Lambda_{2d}$ with $d=2e>2$.

The elliptic K3 surface $X\to \PP^1$  is described by an equation
of the form \begin{equation}\label{eqn:ell}
y^2=x(x^2+a(t)x+b(t))
\end{equation} with general $a(t)$ and $b(t)$  of degree $4$ resp.\ $8$.
Then the fibration has two obvious sections: The section at infinity $\sigma$ given by  $x=z=0$, which will serve us as the zero section,  and a disjoint section $\tau$  given by $x=y=0$. Using the explicit equation, one finds that $\tau$
has order two. Thus, translation by $\tau$  defines an involution
$f:X\congpf X$ which is symplectic.

Still following \cite{vGS}, one computes the singular fibres of
$X\to\PP^1$: There are  eight fibres of type $I_1$
(a rational curve with one node)  and eight fibres of type $I_2$
(the union of two smooth $\PP^1$ intersecting transversally in two points). They can
be found over the zeroes of $b\in H^0(\PP^1,\ko(8))$ resp.\  $a^2-4b\in H^0(\PP^1,\ko(8))$.  The fixed points of $f$ are the nodes of the 
eight $I_1$-fibres which are all avoided by $\sigma$ and $\tau$.
 Moreover, $f$ interchanges the two components of
each $I_2$-fibre.

\vspace{-3ex}
$$
\begin{picture}(150,100)
\put(-50,80){\qbezier(0,0)(-40,-40)(0,-80)}
\put(-50,80){\qbezier(-14,0)(26,-40)(-14,-80)}
\put(-30,0){$\ldots$}
\put(-50,80){\qbezier(60,0)(20,-40)(60,-80)}
\put(-50,80){\qbezier(46,0)(86,-40)(46,-80)}

\put(80,80){\qbezier(55,0)(62,-70)(63,-40)}
\put(80,80){\qbezier(55,-80)(62,-10)(63,-40)}
\put(150,00){$\ldots$}
\put(120,80){\qbezier(55,0)(62,-70)(63,-40)}
\put(120,80){\qbezier(55,-80)(62,-10)(63,-40)}
\put(-1.5,62){\line(1,0){190}}
\put(-60,62){\line(1,0){52}}
\put(-79,62){\line(1,0){10}}
\put(-50.1,62){\circle*{2}}
\put(9.8,62){\circle*{2}}
\put(177,62){\circle*{2}}
\put(137,62){\circle*{2}}
\put(80,65){\tiny\mbox{$\sigma$}}
\put(14.5,22){\line(1,0){175}}
\put(-45,22){\line(1,0){54}}
\put(-79,22){\line(1,0){28}}
\put(-66.1,22.1){\circle*{2}}
\put(-6.1,22.1){\circle*{2}}
\put(177.1,22.1){\circle*{2}}
\put(137,22.1){\circle*{2}}
\put(139.6,40.1){\circle*{2}}
\put(179.6,40.1){\circle*{2}}
\put(80,25){\tiny\mbox{$\tau$}}
\put(-81.5,40){\tiny\mbox{$N_1$}}
\put(-22.5,40){\tiny\mbox{$N_8$}}
\end{picture}
$$
\vspace{2ex}

The components of the $I_2$-fibres not meeting $\sigma$ are denoted $N_1,\ldots,N_8$. Then $\hat N=(1/2)\sum N_i\in\NS(X)$. Moreover, 
if $F$ denotes the class of a generic fibre (and by abuse also 
a generic fibre itself), then $\sigma$ and $F$
span a hyperbolic plane and $\tau=\sigma+2F-\hat N$.
The N\'eron--Severi group of $X$ (for general $a$ and $b$)
is thus $\langle\sigma,F\rangle\oplus \langle N_1,\ldots,N_8,
\hat N\rangle$, which is of rank $10$.

Next consider a curve of the form $C=e N+F+\sigma+\tau$,
where $N$ is one of the $I_2$-fibres, and let $L:=\ko(C)$.
Then $L$ is big and nef. Indeed,
$(L.L)=4e>0$ and $C$ intersects all its irreducible components
positively, e.g.\ $(C.\sigma)=e-1>0$. In fact, $L$ is ample
as it clearly intersects all horizontal curves positively and has also positive intersection
with all $(-2)$-curves (e.g.\ the two components of the $I_2$-fibres).
Moreover, $L$  is primitive, as $(C.N_i)=1$.
Since $f$ respects the fibration and interchanges $\sigma$ and $\tau$, the curve
$C$ is $f$-invariant and disjoint from ${\rm Fix}(f)$.

\vspace{-3ex}
$$
\begin{picture}(150,100)
{  \linethickness{0.28mm}\put(-50,80){\qbezier(60,0)(20,-40)(60,-80)}
\put(-50,80){\qbezier(46,0)(86,-40)(46,-80)}}
\put(110,5){\line(0,1){70}}
\put(-1.5,62){\line(1,0){150}}
\put(-45,62){\line(1,0){38}}
\put(9.9,62.){\circle*{2}}
\put(110,62.1){\circle*{2}}
\put(80,65){\tiny\mbox{$\sigma$}}
\put(14.5,22){\line(1,0){135}}
\put(-45,22){\line(1,0){54}}
\put(-5.9,22.1){\circle*{2}}
\put(110,22.1){\circle*{2}}
\put(80,25){\tiny\mbox{$\tau$}}
\put(-2.5,40){\tiny\mbox{$eN$}}
\put(112.5,40){\tiny\mbox{$F$}}
\put(-80,40){\small\mbox{$C:$}}
\end{picture}
$$
\vspace{-2ex}

Let us now consider the quotient $\bar X:=X/\langle f\rangle$ which is a singular
K3 surface with eight ordinary double points.
Its minimal resolution $Y\to \bar X$ comes with a natural elliptic fibration $Y\to \PP^1$. Note that the
fibres of type $I_1$ and $I_2$ are interchanged when passing from $X$ to $Y$.

The quotient $\bar C:=C/\langle f\rangle\subset \bar X$
avoids the singular locus of $\bar X$ and can thus also
be viewed  as a curve in $Y$. For the same reason, the line bundle $L$ descends
to an ample line bundle $\bar L$ on $\bar Y$.
Note that $\bar C$ decomposes as $\bar C=e\bar N+\bar F
+\bar\sigma$, where $\bar N$ is an $I_1$-fibre of $Y\to \PP^1$,
$\bar F$ is a smooth fibre, and $\bar\sigma$ is a section.

$$
\begin{picture}(150,100)
{  \linethickness{0.28mm}
\put(-50,80){\qbezier(60,0)(40,-0)(65,-80)}
\put(-50,80){\qbezier(60,0)(80,-0)(55,-80)}
}
\put(110,5){\line(0,1){70}}
\put(4.7,62){\line(1,0){150}}
\put(-20,62){\line(1,0){18}}
\put(18.9,62.1){\circle*{2}}
\put(110,62.1){\circle*{2}}
\put(80,65){\tiny\mbox{$\bar\sigma$}}
\put(20.5,40){\tiny\mbox{$e\bar N$}}
\put(112.5,40){\tiny\mbox{$\bar F$}}
\put(-80,40){\small\mbox{$\bar C:$}}
\end{picture}
$$

\begin{lemma}
There exists a stable map $h:D\to \bar X$ of arithmetic genus one
with image $\bar C$ and such that $\km_1(\bar X,\bar L)$ is one-dimensional
in $h$.
\end{lemma}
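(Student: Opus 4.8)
The plan is to exhibit an explicit stable map $h\colon D\to\bar X$ with image $\bar C$ and to verify the four hypotheses of Proposition \ref{prop:defounramified}, which then gives $\dim_h\km_1(\bar X)=1$ directly. Since $\bar C$ avoids the singular locus of $\bar X$, I would carry out the whole construction on the smooth resolution $Y$, where $\bar C=e\bar N+\bar F+\bar\sigma$ with $\bar N$ an $I_1$-fibre, $\bar F$ a smooth fibre and $\bar\sigma$ a section; the deformation theory of the stable map near $\bar C$ is the same on $\bar X$ and on $Y$.

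First I would build the domain $D$ as a chain of $e+2$ smooth components. Take $D_1=\bar F$, a smooth elliptic curve, with $h|_{D_1}$ the inclusion; this is the genus-one embedded component demanded by (ii). Take $D_2\cong\PP^1$ mapping isomorphically onto the section $\bar\sigma$. Finally take $e$ copies $D_3,\dots,D_{e+2}$ of $\PP^1$, each mapped by the normalization $\PP^1\to\bar N$ of the nodal rational curve. I would then glue $D_1$ to $D_2$ at the preimage of $\bar F\cap\bar\sigma$, glue $D_2$ to $D_3$ at the preimage of $\bar\sigma\cap\bar N$, and glue each $D_k$ to $D_{k+1}$ for $k\geq3$ at a preimage of the node $q\in\bar N$, taking care to identify a preimage of one branch of $q$ with a preimage of the other branch. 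By construction $h_*[D]=[\bar F]+[\bar\sigma]+e[\bar N]=[\bar C]$, so the image is $\bar C$ and $\ko(h_*[D])=\bar L$; and since $D$ is a chain of one genus-one component and $e+1$ rational components, $p_a(D)=1$, so $h$ is a genus-one stable map (no component is contracted, so stability is automatic).

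It then remains to check (i)--(iv). Conditions (i), (ii) and the chain structure (iv) are immediate: $\bar F$ and $\bar N$ lie over distinct points of $\PP^1$, so the only incidences in the domain are the $e+1$ chosen nodes, and one peels off $D_{e+2},D_{e+1},\dots$ from the free end as in the induction of Proposition \ref{prop:defounramified}, ending at $D_1=\bar F$ with $h^0(\kn_{D_1/\bar X})=h^0(\omega_{\bar F})=1$. The hard part, and the real content, will be (iii), that $h$ is unramified. On each component this is clear, since inclusions are immersions and the normalization of a node, followed by the inclusion into the smooth surface $Y$, immerses each branch smoothly. The delicate points are the nodes of $D$, where $h$ is unramified precisely when the two branches are carried to linearly independent tangent directions in $Y$. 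For the nodes joining $D_1,D_2$ and $D_2,D_3$ this holds because the section meets each fibre transversally; for the nodes joining $D_k,D_{k+1}$ it holds because I glued opposite branch-preimages of $q$, so the two branches map to the two transverse branches of the node of $\bar N$. Had I instead identified preimages of the same branch, the two would be tangent and $h$ would ramify, which is the point that requires care.

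Once (i)--(iv) are in place, Proposition \ref{prop:defounramified} yields $\dim_h\km_1(\bar X)=1$. Finally, on the fixed surface $\bar X$ the class $\ko(h_*[D])=\bar L$ is locally constant in families of stable maps, so $\km_1(\bar X,\bar L)$ is a union of connected components of $\km_1(\bar X)$ and is therefore also one-dimensional at $h$, as required.
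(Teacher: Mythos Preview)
Your proposal is correct and follows essentially the same construction as the paper: the domain is the chain $D_1=\bar F$, $D_2\cong\bar\sigma$, and $D_3,\dots,D_{e+2}$ normalizations of $\bar N$, glued exactly as you describe, and one then invokes Proposition~\ref{prop:defounramified}. Your discussion of condition~(iii), in particular the need to glue opposite branch-preimages of the node of $\bar N$ so that the map stays unramified there, makes explicit a point the paper leaves to the picture.
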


\begin{proof}
Since $\bar C$ avoids the singularities of $\bar X$, we can equally work
with $\bar C\subset Y$. The curve $D$ shall have components $D_1,D_2,\ldots, D_n$,
$n=e+2$, with $D_1\congpf \bar F$, $D_2\congpf \bar\sigma$, and
$D_i\to \bar N$, $i\geq2$, being the normalization. The gluing is defined according to the 
picture (cf.\ \cite{BHT,Kem,LL}):

\vspace{-2ex}

$$
\begin{picture}(150,100)
\put(42,40){\tiny\mbox{$D_1$}}
\put(22,65){\tiny\mbox{$D_2$}}
\put(-4,85){\tiny\mbox{$D_3$}}
\put(-94,85){\tiny\mbox{$D_n$}}
\put(-150,80){\qbezier(60,0)(40,-0)(75,-80)}
\put(-150,80){\qbezier(60,0)(80,-0)(61.5,-41.4)}
\put(-150,80){\qbezier(59.6,-45)(56,-60)(43,-80)}
\put(-125,80){\qbezier(60,0)(40,-0)(75,-80)}
\put(-125,80){\qbezier(60,0)(80,-0)(61.5,-41.4)}
\put(-125,80){\qbezier(59.6,-45)(56,-60)(43,-80)}
\put(-77.8,6.4){\circle*{2}}
\put(9.8,62){\circle*{2}}
\put(40,62){\circle*{2}}
\put(-52.8,6.4){\circle*{2}}
\put(-40,40){\mbox{$\ldots$}}
\put(40,5){\line(0,1){70}}
\put(5.7,62){\line(1,0){50}}
\put(90,55){\tiny\mbox{$\to$}}
\put(-60,80){\qbezier(60,0)(40,-0)(75,-80)}
\put(-60,80){\qbezier(60,0)(80,-0)(61.5,-41.4)}
\put(-60,80){\qbezier(59.6,-45)(56,-60)(43,-80)}
\put(-12.7,6.4){\circle*{2}}
{  \linethickness{0.28mm}
\put(110,80){\qbezier(60,0)(40,-0)(65,-80)}
\put(110,80){\qbezier(60,0)(80,-0)(55,-80)}
\put(170,16.6){\circle*{3}}

}
\put(230,5){\line(0,1){70}}
\put(174.7,62){\line(1,0){70}}
\put(178.9,62.1){\circle*{2}}
\put(230,62.1){\circle*{2}}
\put(200,65){\tiny\mbox{$\bar\sigma$}}
\put(180.5,40){\tiny\mbox{$e\bar N$}}
\put(232.5,40){\tiny\mbox{$\bar F$}}
\end{picture}
$$

Obviously, $D$ is of arithmetic genus one and $h_*(D)=\bar C$.
Moreover, the assumptions of Proposition \ref{prop:defounramified} are satisfied and hence
$\km_1(Y)$ is of dimension one in $[h:D\to  Y]$.
\end{proof}

Now consider a generic deformation \begin{equation}\label{eqn:fam}
(\kx,\kl)\to S\end{equation}  of $(X,L,f)$, i.e.\
$(\kx_0,f_0)=(X,f)$ for a distinguished $0\in S$ and for generic $t\in S$
the fibre $\NS(\kx_t)$ has  rank $\rho=9$ with $f_t$-invariant part spanned by
$\kl_t$.  Taking quotients, one obtains a family of singular K3 surfaces
$\bar\kx\to S$. Clearly, $L=\ko(C)$ descends to the quotient $\bar X$, for
$C$ is $f$-invariant and avoids the fixed points. Hence,  also the line bundle
$\kl$ descends to  a relative ample line bundle $\bar\kl$. (The obstructions to
deform $L$ resp.\ $\bar L$ sideways are the same.)
Note that for generic $t\in S$ the line bundle $\bar\kl_t$ generates $\Pic(\bar\kx_t)$.

Let us apply the discussion of Section \ref{sec:defo} to $h:D\to\bar\kx_0=\bar X$. So, we
consider the
relative moduli space of stable maps of genus one $\km_1(\bar\kx,\bar\kl)\to
S$.

\begin{corollary}\label{cor:generic}
The stable map $h:D\to\bar X$ thus constructed deforms sideways to stable maps $h_t:D_t\to\bar\kx_t$. Moreover, for generic $t\in S$ the curve $h_{t*}(D_t)\subset\bar\kx_t$ 
and its preimage in $\kx_t$ are  integral and disjoint from the singular locus
resp.\ the fixed point set of $f_t$.
\end{corollary}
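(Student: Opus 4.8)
The plan is to feed the stable map $h:D\to\bar X$ constructed in the preceding lemma into the deformation machinery of Section \ref{sec:defo}, and then to argue that genericity forces the deformed image curves to become integral. First I would establish that $h$ deforms sideways. By the lemma, the moduli space $\km_1(\bar X,\bar L)$ is one-dimensional at $[h]$, so the fibre $\km_0$ of the corresponding irreducible component $\km\subset\km_1(\bar\kx,\bar\kl)$ over $0\in S$ has dimension $\leq g=1$. Corollary \ref{cor:defo} then applies verbatim and shows that $\km$ dominates $S$; in particular, for generic $t\in S$ there is a stable map $h_t:D_t\to\bar\kx_t$ specialising to $h$.

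The core of the argument is the integrality of $h_{t*}(D_t)$ for generic $t$. The central fact, established above, is that for generic $t$ the line bundle $\bar\kl_t$ \emph{generates} $\Pic(\bar\kx_t)$. I would exploit this as follows. The image $h_{t*}(D_t)$ lies in the linear system $|\bar\kl_t|$ by construction. If this image were reducible or non-reduced, it would decompose as a sum of several effective divisor classes in $\NS(\bar\kx_t)$; but since $\bar\kl_t$ is primitive and in fact a generator of the full Picard group, its class admits no nontrivial such decomposition into effective classes (any proper summand would be a non-multiple fractional or negative multiple of the generator, which cannot be effective of smaller positive degree). Hence $h_{t*}(D_t)$ must be an integral curve in $\bar\kx_t$. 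The same reasoning handles reducedness: a non-reduced structure would again split the class. This is the step where the reducible special curve $\bar C=e\bar N+\bar F+\bar\sigma$ on the boundary becomes genuinely integral in the interior of the family.

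Once integrality of $h_{t*}(D_t)\subset\bar\kx_t$ is known, I would transfer the conclusion to $\kx_t$. The preimage of an integral curve under the quotient map $\kx_t\to\bar\kx_t=\kx_t/\langle f_t\rangle$ is either integral or splits into two components exchanged by $f_t$; in the latter case its image class in $\bar\kx_t$ would be divisible by two, contradicting the primitivity of $\bar\kl_t$ (which equals the class of the image of $h_{t*}(D_t)$). Hence the preimage is integral as well. Finally, the disjointness from the fixed locus is an open condition that already holds at $t=0$: the special curve $C$ was constructed disjoint from $\mathrm{Fix}(f)$, so it remains so for generic $t$, and correspondingly $h_{t*}(D_t)$ avoids the eight double points of $\bar\kx_t$.

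The step I expect to be the main obstacle is the integrality claim. The generation of $\Pic(\bar\kx_t)$ by $\bar\kl_t$ is exactly what rules out decompositions of the image class, but one must be careful that \emph{all} irreducible components of the deformed curve, together with any embedded or multiple structure, are accounted for by the divisor class in $\NS$. The subtlety is that $h_t$ need not be a closed immersion a priori, so one argues at the level of the effective cycle $h_{t*}(D_t)$ rather than the scheme $D_t$; the primitivity of the class in a rank-one Picard group then does the work of forcing the cycle to be a single reduced integral curve.
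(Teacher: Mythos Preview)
Your plan is sound for three of the four claims: sideways deformation (via Corollary \ref{cor:defo}), integrality of $h_{t*}(D_t)$ (via $\Pic(\bar\kx_t)=\ZZ\bar\kl_t$), and disjointness from the singular/fixed locus (openness). These are exactly the arguments the paper uses.

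The gap is in your argument for integrality of the \emph{preimage} $C_t\subset\kx_t$. You claim that if $C_t=C_t'+C_t''$ with $f_t(C_t')=C_t''$, then ``its image class in $\bar\kx_t$ would be divisible by two''. But this does not follow. The image $\pi(C_t)$ as a set is still $h_{t*}(D_t)$, with class $\bar\kl_t$; the pushforward as a cycle is $\pi_*(C_t)=2\,h_{t*}(D_t)$, but that identity $\pi_*\pi^*\bar\kl_t=2\bar\kl_t$ holds regardless of whether $C_t$ is integral, so it yields no contradiction. Nor can you conclude $[C_t']=[C_t'']$ in $\NS(\kx_t)$: since $f_t^*$ acts as $-\id$ on the $E_8(-2)$ summand, the class $[C_t']-[C_t'']$ could perfectly well be a nonzero anti-invariant class, and then $\kl_t=[C_t']+[C_t'']$ need not be divisible. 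Primitivity of $\bar\kl_t$ (or of $\kl_t$) alone does not rule out such a splitting.

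The paper closes this gap by a specialization argument instead: if $C_t=C_t'+C_t''$ for generic $t$, specialize to $t=0$ to obtain $C=C'+C''$ with $f(C')=C''$. The smooth fibre $F$ is a component of $C$, say $F\subset C'$; but $f(F)=F$, so $F\subset f(C')=C''$ as well, forcing $F$ to appear with multiplicity $\geq 2$ in $C=eN+F+\sigma+\tau$, a contradiction. You should replace your divisibility step with this (or an equivalent) argument that actually uses the explicit shape of the special curve $C$.
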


\begin{proof}
The existence of the deformation to the nearby fibres follows directly from Proposition
\ref{prop:defounramified} and Corollary \ref{cor:defo}. Since $\bar C=h_*(D)$ avoids the singularities of $\bar X$, this will
hold for generic $t$. Clearly, $h_{t*}(D_t)\in|\bar\kl_t|$. Therefore, since
$\kl_t$ generates  the invariant part of $\NS(\kx_t)$ and hence $\bar\kl_t$ generates  $\NS(\bar\kx_t)$, the curve
$h_{t*}(D_t)$ must be integral.

Suppose the preimage $C_t$ of $h_{t*}(D_t)$ were not integral for $t$ generic, i.e.\  $C_t=C_t'+C_t''$ with $f_t(C_t')=C_t''$. (Use that $f_t$ is an involution.) The two components would then specialize to
$C'$ resp.\ $C''$ on $X$ with  $C=C'+C''$ and $f(C')=C''$. We may assume
that $F\subset C'$. But then also $F=f(F)\subset f(C')=C''$ which eventually yields
the contradiction that $F$ appears with multiplicity at least two in $C$. 
\end{proof}

\begin{remark}\label{rem:covering} In fact, since the stable map $h:D\to X$ deforms with the fibre component $F$ in a one-dimensional family, also the deformations $h_t:D_t\to \bar\kx_t$
 come in a  family dominating $\bar\kx_t$. 
Thus, one obtains a dominating family of integral genus one curves in the generic
deformation $\kx_t$ that are $f_t$-invariant and avoid the fixed points of $f_t$.
\end{remark}
\section{Proof of the main theorem}

The outcome of the above construction are generic
K3 surfaces $\kx_t\in\gm_{\widetilde\Lambda_d}$ with a symplectic involution $f_t$ such that $\bar\kx_t=\kx_t/\langle f_t\rangle$ contains a one-dimensional family of integral curves of geometric genus one that avoids the singular locus. 

This immediately leads to a proof of our main result.

\begin{theorem}
For all $(X,f)\in\gm_{\widetilde\Lambda}$, the symplectic involution
$f:X\congpf X$ acts  as $\id$ on $\CH^2(X)$.
\end{theorem}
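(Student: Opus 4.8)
The plan is to verify the hypotheses of Proposition~\ref{prop:Roit} for the generic member of $\gm_{\widetilde\Lambda_d}$ and then to propagate the conclusion to \emph{every} member by specialisation. For the generic member nothing beyond Section~\ref{sec:specialell} is needed: starting from the special elliptic surface $(X,L,f)$ and its generic deformation $(\kx,\kl)\to S$ with $\kx_0=X$, Corollary~\ref{cor:generic} produces, for generic $t\in S$, an integral curve $C_t\subset\kx_t$ of geometric genus one---namely the preimage of the integral image $h_{t*}(D_t)\subset\bar\kx_t$ of the deformed stable map---which is $f_t$-invariant and disjoint from ${\rm Fix}(f_t)$; by Remark~\ref{rem:covering} these curves move in a dominating family. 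Thus the hypotheses of Proposition~\ref{prop:Roit} are met and $f_t^*=\id$ on $\CH^2(\kx_t)$ for generic $t$. Since such deformations dominate $\gm_{\widetilde\Lambda_d}$, the assertion holds for all $(X,f)$ in a dense open subset $U\subset\gm_{\widetilde\Lambda_d}$.

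\emph{From generic to all.} Fix an arbitrary $(X_0,f_0)\in\gm_{\widetilde\Lambda_d}$. As $U$ is dense, $0:=[(X_0,f_0)]$ lies in $\overline{U}$, so I choose a smooth curve $B\to\gm_{\widetilde\Lambda_d}$ through $0$ whose generic point lands in $U$ and pull back the universal family to a smooth projective family $\kx_B\to B$ with relative symplectic involution $f$, special fibre $X_0$ and generic fibre a K3 surface in $U$. Consider the relative correspondence $\kz:=\Gamma_f-\Delta$ on $\kx_B\times_B\kx_B$ (the relative graph and diagonal); the claim, reformulated, is $\kz_{0,*}=0$ on $\CH^2(X_0)_0$. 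On the geometric generic fibre the previous step gives $\kz_{\bar\eta,*}=0$ on $\CH^2(\kx_{\bar\eta})_0$, and by the Bloch--Srinivas decomposition of the diagonal this forces $N\kz_{\bar\eta}$, for some $N\geq1$, to be supported up to rational equivalence on $(\text{divisor})\times\kx_{\bar\eta}$ together with $\kx_{\bar\eta}\times(\text{points})$. The specialisation homomorphism of Chow groups along $B$ is compatible with correspondence composition and preserves these support types, so $N\kz_0$ is likewise supported on $(\text{divisor})\times X_0+X_0\times(\text{points})$. Each such summand annihilates $\CH^2(X_0)_0$ (a cycle on a divisor times $X_0$ kills zero-cycles moved off that divisor, and a cycle on $X_0$ times points scales with the degree), so $N\cdot\kz_{0,*}=0$ there. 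Since $\CH^2(X_0)$ is torsion free by Roitman's theorem, the factor $N$ drops out and $f_0^*=\id$ on $\CH^2(X_0)_0$, hence on all of $\CH^2(X_0)$ because $f_0$ preserves degrees.

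The decisive difficulty is exactly this passage from the generic to an arbitrary member, not the cycle theory on a single surface. Over special points of $\gm_{\widetilde\Lambda_d}$ the covering curves $C_t$ may genuinely degenerate---becoming reducible or of geometric genus zero---so that Proposition~\ref{prop:Roit} cannot be invoked there directly, and triviality of the action must instead be transported from the generic fibre; it is the spreading-out and specialisation of the diagonal decomposition, together with the torsion freeness of $\CH^2$, that effects this transport. One must in addition check that the one-parameter degeneration can be taken smooth with the stated fibres and that the specialisation map genuinely respects the two support types, both of which are standard. Finally, the small even degree $d=2$ not reached by the elliptic construction is already covered by the classically known quartic case recalled in the introduction.
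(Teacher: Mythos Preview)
Your argument is correct, and for the generic member it tracks the paper exactly. The difference lies in the passage ``from generic to all.'' The paper does this in one line: given arbitrary $(X_0,f_0)$ and any $x\in X_0$, lift $x$ to a section $x_t$ of a one-parameter family with generic fibre in $U$; then $[x_t]=[f_t(x_t)]$ in $\CH^2(\kx_t)$ for generic $t$, and the specialisation homomorphism on Chow groups sends this equality of \emph{points} to $[x]=[f(x)]$ in $\CH^2(X_0)$. No Bloch--Srinivas decomposition, no torsion argument, no analysis of support types is needed---one simply specialises the rational equivalence itself. Your route via the correspondence $\kz=\Gamma_f-\Delta$, a decomposition of $N\kz_{\bar\eta}$, and Roitman is valid and is the standard machine one would reach for when the generic statement is more abstract (e.g.\ triviality of a motive), but here the generic statement is already an equality of two explicit zero-cycles, and that specialises directly.

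One point you pass over that the paper does address: you assert that the deformations of the special elliptic surface ``dominate $\gm_{\widetilde\Lambda_d}$,'' but this requires knowing that the generic fibre of $(\kx,\kl)\to S$ lands in $\gm_{\widetilde\Lambda_d}$ rather than $\gm_{\Lambda_d}$. The paper checks this via \cite[Prop.\ 2.7]{vGS}: the invariant line bundle $\kl_t$ descends to $\bar\kx_t$ (by construction it does, since $\bar\kl$ exists globally), and this descent characterises the $\widetilde\Lambda_d$ component. Connectedness of $\gm_{\widetilde\Lambda_d}$ then finishes the identification. This is the only substantive step you leave implicit.
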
 

\begin{proof} The case $d=2$ follows from \cite{Ped}. So we assume $d=2e>2$, i.e.\ $e>1$. 
We first show that the above discussion combined with Proposition \ref{prop:Roit} proves the assertion for generic $(X,f)\in\gm_{\widetilde\Lambda_{d}}$. 

Consider a deformation (\ref{eqn:fam})  of the special elliptic K3 surface (\ref{eqn:ell}).
Then for generic $t\in S$ one has $\NS(\kx_t)=\widetilde\Lambda_d$. Indeed, by \cite[Prop.\ 2.7]{vGS} only in this case all $f_t$-invariant line bundles actually descend to the quotient $\bar\kx_t$. 
Hence, the elliptic K3 surfaces described by (\ref{eqn:ell}) can be connected to the generic K3 surface parametrized by $\gm_{\widetilde\Lambda_d}$. Here, we use 
 that $\gm_{\widetilde\Lambda_d}$ is connected.
 
 The generic fibre of the family (\ref{eqn:fam}) satisfies the assumption of Proposition \ref{prop:Roit}.
 Indeed, by Corollary \ref{cor:generic} and Remark \ref{rem:covering} there exists a dominating family
 of integral curves of arithmetic genus one on the generic fibre $\kx_t$ that are invariant under the
 involution and avoid the fixed points.

Now consider an arbitrary $(X,f)\in\gm_{\widetilde\Lambda_d}$. Then any $x\in X$ can be viewed
as a specialization of points $x_t$ in generic deformations $(\kx_t,f_t)\in\gm_{\widetilde\Lambda_d}$. Clearly, the points $f_t(x_t)$ then specialize to  $f(x)$. For generic $\kx_t$ we
have proved $[x_t]=[f_t(x_t)]$ in $\CH^2(\kx_t)$ already and specialization thus yields $[x]=[f(x)]$
in $\CH^2(X)$ for all $x\in X$.
\end{proof}
 

\section{Further comments} \label{sec:p357}
 
We briefly outline how to adapt our techniques to the case of symplectic automorphisms of prime
order. For $p=3,5$, and $7$, Garbagnati and Sarti describe in \cite[Thm.\ 4.1]{GS}  lattices $\Omega_p$
of rank $12$, $16$, resp.\ $18$ that are isomorphic to the anti-invariant part of $f^*$ acting
on $H^2(X,\ZZ)$. Similar to the case $p=2$, the generic polarized K3 surface $(X,L)$ of degree $2d$ with a symplectic automorphism $f:X\congpf X$ of order $p$ leaving $L$ fixed
has Picard group isomorphic to
$\Lambda_{p,d}:=\ZZ L\oplus\Omega_p$ or possibly, if  $d\equiv 0(p)$, 
isomorphic to  a  lattice  $\widetilde\Lambda_{p,d}$
that contains $\Lambda_{p,d}$ as a primitive  sublattice of index $p$.
In fact, the case $\Lambda_{7,d}$ is not realized if $d\equiv 0(7)$ 
(cf.\ \cite[Prop.\ 5.2]{GS}), but unfortunately it is not known whether
the lattices $\widetilde\Lambda_{p,d}$ are unique for given $p$ and $d\equiv0(p)$
(see \cite[Sec.\ 6]{GS}).
 The moduli spaces are of dimension $7$, $3$, resp.\ $1$.

Examples of symplectic automorphisms of order $3,5$, and $7$ have been described
in \cite[Sec.\ 3.1]{GS}.
They are again given by translation by a torsion section. The Picard numbers in these examples
are $14, 18$, resp.\ $20$ and in each case they correspond to points in (at least) one of
component of the moduli space
of polarized K3 surfaces $(X,L)$ with a symplectic automorphism $f$ of degree $L^2=2d$.
This leads to the following result:

\begin{theorem}\label{thm:manin7}  For $p=3,5$, or $7$ and $d=ep$, there exists
one component of the moduli space of polarized K3 surfaces $(X,L)$ with a symplectic automorphism $f:X\congpf X$ of order $p$ and $L^2=2d$  such that Conjecture \ref{conj:BB}
holds true.\qqed
\end{theorem}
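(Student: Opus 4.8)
The plan is to transport the argument used for $p=2$ essentially verbatim, replacing the special surface (\ref{eqn:ell}) by the explicit elliptic examples of \cite[Sec.\ 3.1]{GS}. In those examples $f:X\congpf X$ is again translation by a torsion section, now of order $p$, on an elliptic K3 surface $X\to\PP^1$ with zero section $\sigma$; the fixed points of $f$ are isolated, sit on singular fibres, and are avoided by all torsion sections. Since the Picard numbers are $14,18,20$ and the anti-invariant lattice is $\Omega_p$ of rank $12,16,18$, the invariant part of $\NS(X)$ has rank two in each case and is spanned by $\sigma$ and the fibre class $F$, exactly as for $p=2$. First I would fix the component of the moduli space containing such an example and take $L=\ko(C)$ ample and primitive for a suitable $f$-invariant effective curve $C$.

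The key construction is the $f$-invariant genus-one curve. In place of $C=eN+F+\sigma+\tau$ I would take $C=eN+F+\sum_{i=0}^{p-1}f^i(\sigma)$, that is, the fibre $F$ together with the full $f$-orbit of the zero section and a multiple of a reducible fibre $N$ whose components are cyclically permuted (without fixed component) by $f$. By construction $C$ is $f$-invariant and disjoint from ${\rm Fix}(f)$, and one arranges $N$ and $e$ so that $L=\ko(C)$ is ample and primitive. Passing to the quotient $\bar X=X/\langle f\rangle$ and its resolution, $C$ descends to $\bar C$, and one assembles a stable map $h:D\to\bar X$ of arithmetic genus one with image $\bar C$ whose components form a chain as in Section \ref{sec:specialell}. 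The aim is to arrange $D_1,\dots,D_n$ so that the hypotheses of Proposition \ref{prop:defounramified} hold unchanged, whence $\km_1(\bar X,\bar L)$ is one-dimensional at $h$.

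From here the argument is formal and parallels Section \ref{sec:specialell}. By Corollary \ref{cor:defo} the stable map $h$ deforms sideways in a generic deformation $(\kx,\kl)\to S$ of $(X,L,f)$ with $\NS(\kx_t)=\widetilde\Lambda_{p,d}$, the lattice realized by the examples, and since $\bar\kl_t$ generates $\NS(\bar\kx_t)$ the image $h_{t*}(D_t)$ is integral. Its preimage $C_t\subset\kx_t$ is then integral by the multiplicity argument of Corollary \ref{cor:generic}, now adapted to order $p$: were $C_t$ reducible, its components would fall into a single $f_t$-orbit of size $p$, forcing the fibre $F$ (which occurs with multiplicity one in $C$) to appear with multiplicity $\geq p$ after specialization, a contradiction. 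As the deformation carries $h$ along with its fibre component $F$, one obtains a dominating family of integral $f_t$-invariant curves of geometric genus one avoiding the fixed points. Since Proposition \ref{prop:Roit} is stated for symplectic automorphisms of arbitrary finite order, it yields $f_t^*=\id$ on $\CH^2(\kx_t)$ for generic $t$, and a specialization argument as in the proof of Theorem \ref{thm:main} propagates this to every member of the component.

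The main obstacle is exactly the point noted before the statement: for $d\equiv0(p)$ it is unknown whether $\widetilde\Lambda_{p,d}$ is unique, so the construction controls only the single connected component into which the examples of \cite{GS} deform --- hence the theorem asserts one component rather than all of them. A second, genuinely case-dependent difficulty is combinatorial: for each $p$ one must produce a singular-fibre configuration and a chain of rational components realizing Proposition \ref{prop:defounramified}, and in particular verify that $f$ permutes the components of the reducible fibre $N$ cyclically with no fixed component. This replaces the mere interchange of the two components of an $I_2$-fibre used for $p=2$ and is the part of the argument that has to be checked separately in each of the three cases.
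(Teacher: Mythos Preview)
Your proposal is correct and follows exactly the approach the paper sketches in Section~\ref{sec:p357}: the paper gives no detailed proof (the theorem carries only a \qqed), and your argument supplies precisely the expected adaptation of the construction in Section~\ref{sec:specialell} to the elliptic examples of \cite[Sec.\ 3.1]{GS}. You also correctly isolate the two residual issues the paper itself flags---the unknown uniqueness of $\widetilde\Lambda_{p,d}$, which limits the conclusion to one component, and the case-by-case combinatorics of the singular fibres.
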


 It is very likely that for $p=7$ and $d\equiv 0(7)$ the result can be strenghtened
 to cover all K3 surfaces, as we would expect that $\widetilde\Lambda_{7,d}$ is in fact
 unique.
 


\end{document}